\theoremstyle{plain}
\newtheorem{thm}{Theorem}[section]
\newtheorem{lemma}[thm]{Lemma}
\newtheorem{cor}[thm]{Corollary}
\theoremstyle{definition}
\theoremstyle{remark}
\newtheorem{remark}[thm]{Remark}
\newcommand{\nc}{\newcommand}
\def\makeop#1{\expandafter\def\csname#1\endcsname
  {\mathop{\rm #1}\nolimits}\ignorespaces}
\def\makebb#1{\expandafter\def
  \csname bb#1\endcsname{{\mathbb{#1}}}\ignorespaces}
\def\makebf#1{\expandafter\def\csname bf#1\endcsname{{\bf
      #1}}\ignorespaces} 
\def\makegr#1{\expandafter\def
  \csname gr#1\endcsname{{\mathfrak{#1}}}\ignorespaces}
\def\makescr#1{\expandafter\def
  \csname scr#1\endcsname{{\EuScript{#1}}}\ignorespaces}
\def\makecal#1{\expandafter\def\csname cal#1\endcsname{{\mathcal
      #1}}\ignorespaces} 
\def\doLetters#1{#1A #1B #1C #1D #1E #1F #1G #1H #1I #1J #1K #1L #1M
                 #1N #1O #1P #1Q #1R #1S #1T #1U #1V #1W #1X #1Y #1Z}
\def\doletters#1{#1a #1b #1c #1d #1e #1f #1g #1h #1i #1j #1k #1l #1m
                 #1n #1o #1p #1q #1r #1s #1t #1u #1v #1w #1x #1y #1z}
     \def\qed{\qedmark\medbreak}%
\def\qedmark{{\enspace\vrule height 6pt width 5pt depth 1.5pt}}%
\def\Spec{{\rm Spec}\,}
\def\Fpbar{\overline{\bbF}_p}
\def\Fp{{\bbF}_p}
\def\Fq{{\bbF}_q}
\def\Qp{{\bbQ}_p}
\def\Zp{{\bbZ}_p}
\newcommand{\Z}{\mathbb Z}
\newcommand{\Q}{\mathbb Q}
\newcommand{\F}{\mathbb F}
\newcommand{\npr}{\noindent }
\newcommand{\<}{\langle}   
\renewcommand{\>}{\rangle} 
\nc{\embed}{\hookrightarrow}
\newcommand{\dieu}{Dieudonn\'{e} }
\nc{\ol}{\overline}
\nc{\wt}{\widetilde}
\nc{\opp}{\mathrm{opp}}
\def\sfF{\mathsf{F}}
\def\sfV{\mathsf{V}}
\begin{document}
\renewcommand{\thefootnote}{\fnsymbol{footnote}}
\setcounter{footnote}{-1}
\numberwithin{equation}{section}


\title[Fields of definition of components]
  {On fields of definition of components of the Siegel supersingular
  locus}
\author{Chia-Fu Yu}
\address{
Institute of Mathematics, Academia Sinica \\
6th Floor, Astronomy Mathematics Building \\
No. 1, Roosevelt Rd. Sec. 4 \\ 
Taipei, Taiwan, 10617} 
\email{chiafu@math.sinica.edu.tw}

\address{
 National Center for Theoretical Sciences
 No.~1  Roosevelt Rd. Sec.~4,
 National Taiwan University
 Taipei, Taiwan, 10617}

\date{\today} \subjclass[2010]{11R52, 11G10} \keywords{superspecial
  abelian surface, supersingular locus}

\begin{abstract}
Recently Ibukiyama proves an explicit formula for the number of 
certain non-principal polarizations on a superspecial abelian surface, 
extending his earlier work with Katsura for principal polarizations
[Compos. Math. 1994]. 
As a consequence of Ibukiyama's formula, there exists a
geometrically irreducible component of the Siegel supersingular locus
which is defined over the prime finite field. In this note we give a
direct proof of this result.    
\end{abstract} 

\maketitle


\section{Introduction}
\label{sec:01}

Let $p$ be a rational prime number, and let $n\ge 1$ be a positive
integer.
Let $\calA_n$ denote the coarse moduli space over $\Fp$ of $n$-dimensional
principally polarized abelian varieties.
The supersingular locus of $\calA_n\otimes \Fpbar$ is denoted by $\ol
\calS_n$, which is the closed reduced $\Fpbar$-subscheme consisting of all
supersingular points in $\calA_n(\Fpbar)$. An abelian
variety over $\Fpbar$ is {\it supersingular} (resp.~{\it
  superspecial}) 
if it is 
isogenous (resp. isomorphic) to a product of supersingular elliptic curves.
It is known that $\ol \calS_n$ is defined over $\Fp$,
i.e. the action of the Galois group $\Gamma:=\Gal(\Fpbar/\Fp)$ 
leaves the set $\ol \calS_n$ stable. 
The unique model of $\ol \calS_n$ over $\Fp$ in $\calA_n$ 
is denoted by $\calS_n$, which is called the supersingular locus of
$\calA_n$. The set of irreducible components of $\ol \calS_n$ is denoted by
$\Pi_0(\ol \calS_n)$, on which $\Gamma$ operates.
An irreducible component $V\in \Pi_0(\ol \calS_n)$ is defined over $\Fp$
if and only if $V$ is stable under the action of $\Gamma$. 
In this note we give a proof of the following result.

\begin{thm}[Li-Oort, Ibukiyama]\label{1.1}
  There exists an irreducible component of $\ol \calS_n$ that is defined
  over $\Fp$. 
\end{thm}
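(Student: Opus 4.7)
The plan is to exhibit an $\Fp$-rational irreducible component explicitly, using the Li--Oort structure theorem for the supersingular locus combined with an $\Fp$-rational superspecial seed.

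The first step is to produce an $\Fp$-rational superspecial point of $\calS_n$. For every prime $p$ there exists a supersingular elliptic curve $E$ defined over $\Fp$: this is classical, following for instance from Deuring's theory together with the fact that among the supersingular $j$-invariants in $\bbF_{p^2}$ at least one lies in $\Fp$ (with standard explicit examples in small characteristic). Then $(X_0,\lambda_0):=(E^n,\lambda_{\mathrm{prod}})$ is an $\Fp$-rational principally polarized superspecial $n$-dimensional abelian variety, giving an $\Fp$-rational point $x_0\in \calS_n(\Fp)$.

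Next I would invoke the Li--Oort structure theorem, which describes each irreducible component $V\in \Pi_0(\ol\calS_n)$ as the image in $\calA_n\otimes\Fpbar$ of a polarized flag type quotient (PFTQ) variety built from a superspecial datum $y$ together with a combinatorial type $\xi$. Since every component contains a superspecial point, $\Pi_0(\ol\calS_n)$ is indexed by a finite set of such pairs $(y,\xi)$, on which $\Gamma$ acts via $\mathrm{Frob}_p\cdot (y,\xi) = (\mathrm{Frob}_p(y),\mathrm{Frob}_p(\xi))$. Hence any Frobenius-fixed pair yields an $\Fp$-rational component. To exhibit such a pair, I would take $y=y_0=[(X_0,\lambda_0)]$, which is Frobenius-fixed by the first step, and let $\xi_0$ be the canonical type coming from the iterated $\sfF$- and $\sfV$-filtration on the Dieudonn\'e module $\D(X_0)$; because $X_0$ is defined over $\Fp$, this isotropic flag is manifestly $\Gamma$-stable, so the resulting component $V_{(y_0,\xi_0)}$ is defined over $\Fp$.

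The principal obstacle is to verify that this canonical choice really produces an irreducible component of the maximal dimension $\lfloor n^2/4\rfloor$, as opposed to a lower-dimensional stratum inside $\ol\calS_n$. If the naive canonical filtration does not suffice directly, one falls back on a parity/orbit argument: the set of components through $x_0$ is a finite $\Gamma$-set whose cardinality can be read off from the Li--Oort combinatorics, and one then argues that the number of PFTQ types through a fixed $\Fp$-rational superspecial point forces the existence of at least one $\Gamma$-fixed element. Either way the conclusion is an explicit $\Fp$-rational component of $\ol\calS_n$.
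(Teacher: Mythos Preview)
Your proposal has a genuine gap, and it lies exactly where the real content of the theorem is.

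The Li--Oort parametrization of $\Pi_0(\ol\calS_n)$ is not by pairs (superspecial point of $\calS_n$, combinatorial type $\xi$). Each component $V$ corresponds to the isomorphism class of the \emph{top} $(A_{n-1},\eta_{n-1})\simeq(A,p^m\lambda)$ of the PFTQ tower, and this $(A,\lambda)$ lies in $\Lambda_n$ when $n$ is odd but in $\Sigma_n$ when $n$ is even, where $\Sigma_n$ is the set of superspecial $(A,\lambda)$ with $\ker\lambda=A[F]$. There is no separate ``combinatorial type'' parameter once the top is fixed. Your object $(E^n,\lambda_{\mathrm{prod}})$ is a principally polarized superspecial point of $\calS_n$ sitting at the \emph{bottom} of many PFTQ towers; it is an element of $\Lambda_n(\Fp)$, not of $\Sigma_n(\Fp)$. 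So for odd $n$ your seed does (after correcting the description of the correspondence) give an $\Fp$-rational component, and this is indeed the trivial half of the theorem. For even $n$ it gives nothing: the question is precisely whether $\Sigma_n(\Fp)\neq\emptyset$, and your construction never produces a polarization with $\ker\lambda=A[F]$.

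Your proposed remedies do not close this gap. The ``canonical flag from the iterated $\sfF$- and $\sfV$-filtration on $\D(X_0)$'' is vacuous here: $X_0$ is superspecial, so $\sfF M=\sfV M$ and the filtration degenerates, singling out no particular PFTQ among the many with bottom $(X_0,\lambda_0)$. The fallback ``parity/orbit'' argument is unsubstantiated: the set of components through a fixed $\Fp$-rational superspecial point is a $\Gamma$-set, but you give no reason its cardinality is odd or that $\sigma_p$ must fix an element. The paper's proof confronts the even case head-on by explicitly constructing, over $\Fp$, a superspecial abelian surface $(A,\lambda)$ with $\ker\lambda=A[F]$ (Theorem~1.2), via a Weil restriction giving $\pi_{A_0}^2=p$ and a Dieudonn\'e-module computation; products then handle all even $n$. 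That construction is exactly the missing ingredient in your outline.
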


In Section 2, 
we shall give some background knowledge on $\Pi_0(\ol \calS_n)$ due to
Li and Oort \cite{li-oort}.
Based on loc.~cit., Theorem~\ref{1.1} is reduced to 
non-emptiness of the set of certain polarized superspecial 
abelian varieties that admit a model defined over $\Fp$ 
(the set $\Lambda_n(\Fp)$ or $\Sigma_n(\Fp)$ in 
Subsection~\ref{sec:2.1}); see Theorem~\ref{2.3}. 
It follows that Theorem~\ref{1.1} is trivial when $n$ is odd. 
When $n$ is even, Theorem~\ref{1.1} then follows from the following
result.

\begin{thm}\label{1.2}
  There exists a polarized superspecial abelian surface $(A,\lambda)$
  over $\Fp$ such that $\ker \lambda = A [F]$, 
  where $F:A\to A^{(p)}=A$ is the relative
  Frobenius morphism on $A$. 
\end{thm}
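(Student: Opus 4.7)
The plan is to produce $(A,\lambda)$ over $\F_p$ using the Moret--Bailly / Li--Oort parameterization of polarized superspecial surfaces with $\ker\lambda = A[F]$.

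First, I would fix a supersingular elliptic curve $E$ over $\F_p$; such an $E$ exists for every prime $p$ (by explicit models for $p=2,3$ and by the Deuring class number formula for $p\ge 5$, which
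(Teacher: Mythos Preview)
Your plan has a genuine obstruction when $p\equiv 3\pmod 4$. Starting from a supersingular elliptic curve $E$ over $\Fp$ (for $p\ge 5$ one necessarily has $\pi_E^2=-p$), the Moret--Bailly/Li--Oort family of degree-$p$ isogenies $(A,\lambda)\to (E^2,\lambda_0)$ is a $\bfP^1$ carrying the $\Fp$-structure coming from $E^2$, and the locus where $\ker\lambda=A[F]$ is cut out by the equation $a^{p+1}+b^{p+1}=0$ (see \cite[(3.4)]{katsura-oort:surface} or \cite[Lemma~4.3]{yu:ss_siegel}). Over $\Fp$ this becomes $a^2+b^2=0$, which has a nontrivial solution if and only if $\left(\frac{-1}{p}\right)=1$. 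So for $p\equiv 3\pmod 4$ there is \emph{no} $\Fp$-rational point on the relevant locus, and your construction produces nothing. This is exactly the content of Lemma~\ref{3.2} in the paper, which carries the hypothesis $\left(\frac{-1}{p}\right)=1$.

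The paper's proof of Theorem~\ref{1.2} for general $p$ avoids this by changing the base surface. Instead of $E^2$ with $\pi^2=-p$, it takes $A_0=\Res_{\F_{p^2}/\Fp}E'$ for a supersingular $E'/\F_{p^2}$ with $\pi_{E'}=p$, so that $\pi_{A_0}^2=+p$. On the \dieu module this forces $\sfF=\sfV=\sqrt{p}$, and a short computation with the induced pairing on $M_0/\sfV M_0$ shows that \emph{every} $\Fp$-rational degree-$p$ isogeny $\alpha:A\to A_0$ yields $\ker(\alpha^*\lambda_0)=A[F]$. The quadratic obstruction simply disappears because the relevant bilinear form on the $2$-dimensional $\Fp$-space is alternating rather than Hermitian. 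If you want to salvage your approach, the missing idea is precisely this: replace the naive product $E^2$ by a principally polarized superspecial surface over $\Fp$ whose Frobenius squares to $+p$.
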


In \cite{ibukiyama:quinary,ibukiyama:type} Ibukiyama gives an explicit
formula for the cardinality of $\Sigma_2(\Fp)$,
and as a byproduct he obtains Theorem~\ref{1.2}. 
The result of Ibukiyama confirms the existence
of polarized abelian surfaces as in Theorem~\ref{1.2}. We construct such an
polarized abelian surface directly in Section~\ref{sec:3}.  
This proves Theorem~\ref{1.1} by a different method.

\section{Preliminaries and background}
\label{sec:02}


\subsection{Finite sets $\Lambda_n$ and $\Sigma_n$}
\label{sec:2.1}

Let $\Gamma:=\Gal(\Fpbar/\Fp)$ denote the Galois group over $\Fp$, and
put $\Gamma_2:=\Gal(\Fpbar/\F_{p^2})$. 
The Frobenius automorphism $a\mapsto a^p$ in $\Gamma$ is denoted by
$\sigma_p$. 
For any positive integer $n\ge 1$, let $\Lambda_n$ denote the set of
isomorphism classes of superspecial principally polarized abelian
varieties of dimension $n$ over $\Fpbar$. This is a finite set on
which $\Gamma$ acts. It is known that the subgroup $\Gamma_2$ acts
trivially on $\Lambda_n$ so the action factors through
the quotient $\Gal(\F_{p^2}/\Fp)$. Put   
\[ \Lambda_n(\Fp):=\Lambda_n^\Gamma\subset \Lambda_n, \]
the subset of elements fixed by $\sigma_p$. Let $E_0$ be a
supersingular elliptic curve over $\Fp$ such that the endomorphism
endomorphism 
$\pi_{E_0}$ of $E_0$ satisfies $\pi_{E_0}^2=-p$. Put $A_0=E_0^n$ and take
$\lambda_0$ to be the product of copies of 
the canonical principal polarization of $E_0$. 
Clearly, $(A_0,\lambda_0)$
is a superspecial principally polarized abelian variety of dimension
$n$ over $\Fp$. In particular, the set $\Lambda_n(\Fp)$ is nonempty.

For any even positive integer $n=2c$, denote by $\Sigma_{n}$ the set of
isomorphism classes of superspecial polarized abelian varieties
$(A,\lambda)$ of dimension $n$ over $\Fpbar$ such that 
$\ker \lambda=A[F]$, where $A^{(p)}$ is the base change of $\Spec
\Fpbar\to \Spec \Fpbar$ induced by $\sigma_p$, and 
$F:A\to A^{(p)}$ is the relative Frobenius morphism.
Clearly, for any $(A,\lambda)\in \Sigma_{n}$, one has $\deg
\lambda=p^n$. The Galois group $\Gamma$ acts on $\Sigma_{n}$ which
factors through $\Gal(\F_{p^2}/\Fp)$. Indeed, any superspecial abelian
variety over $\Fpbar$ admits a model $A'_0$ over $\F_{p^2}$ 
such that any endomorphism of $A$ is defined over $\F_{p^2}$ (for the
$\F_{p^2}$-structure of $A_0'$), in particular, any
polarization of $A$ is defined over $\F_{p^2}$. Thus, the group
$\Gamma_2$ acts trivially on $\Sigma_n$. Similarly, we define  
\[ \Sigma_n(\Fp):=\Sigma_n^\Gamma\subset \Sigma_n, \]
the subset of elements fixed by $\sigma_p$. Unlike $\Lambda_n$, it is
not clear whether $\Sigma_n(\Fp)$ is nonempty a priori. 

Recall that the field of moduli of a polarized abelian variety
$(A,\lambda)$ over $\Fpbar$ is the field of definition of the
isomorphism class $[(A,\lambda)]$, that is, the smallest 
finite field $\F_{p^a}$ 
such that $(A,\lambda)\simeq ({}^\sigma A, {}^\sigma \lambda)$ over
$\Fpbar$ for any $\sigma\in \Gal(\Fpbar/\F_{p^a})$. 

\begin{lemma}\label{2.1}
  Let $(A,\lambda)$  be any polarized abelian variety over
  $\Fpbar$, and suppose that the field of moduli of $(A,\lambda)$ is 
  $\F_{p^a}$. 
  Then $(A,\lambda)$  has a model defined over
  $\F_{p^a}$. 
\end{lemma}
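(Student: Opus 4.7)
The plan is to construct an explicit Galois descent datum for $(A,\lambda)$ from some large finite field down to $\F_{p^a}$, and then invoke effective descent for projective schemes. Let $\phi$ be the Frobenius $x\mapsto x^{p^a}$, which topologically generates $\Gal(\Fpbar/\F_{p^a})$. The hypothesis on the field of moduli provides an isomorphism $f\colon (A,\lambda)\isoto ({}^\phi A,{}^\phi\lambda)$ over $\Fpbar$. Iterating, we set
$$f_n := {}^{\phi^{n-1}}f\circ\cdots\circ {}^{\phi}f\circ f\colon (A,\lambda)\longrightarrow ({}^{\phi^n}A,{}^{\phi^n}\lambda),$$
and a direct check shows that $f_{m+n}={}^{\phi^n}f_m\circ f_n$, so $\phi^n\mapsto f_n$ is a candidate cocycle.

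Next we choose $N\ge 1$ large enough that both $(A,\lambda)$ is defined over $\F_{p^{aN}}$ and every element of the finite group $G:=\Aut_{\Fpbar}(A,\lambda)$ is already $\F_{p^{aN}}$-rational; such an $N$ exists because polarized abelian varieties have finite automorphism groups. With this choice ${}^{\phi^N}A=A$, ${}^{\phi^N}\lambda=\lambda$, and $f_N$ lies in $G$, hence is $\F_{p^{aN}}$-rational, so ${}^{\phi^N}f_N=f_N$. An immediate induction using the cocycle identity then gives $f_{Nk}=f_N^k$ for every $k\ge 1$. Letting $m$ denote the order of $f_N$ in the finite group $G$, we obtain $f_{Nm}=\id$.

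The identity $f_{Nm}=\id$ says that $\{f_n\}_{0\le n<Nm}$ is a well-defined Galois descent datum for $(A,\lambda)$ along the cyclic extension $\F_{p^{aNm}}/\F_{p^a}$. Since polarized abelian varieties are projective via the ample line bundle associated to $\lambda$, descent is effective and produces a polarized abelian variety $(A_0,\lambda_0)$ over $\F_{p^a}$ whose base change to $\Fpbar$ is isomorphic to $(A,\lambda)$.

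The main subtlety lies in the choice of $N$: we need $f_N$ itself, not just each of its factors ${}^{\phi^i}f$, to be $\F_{p^{aN}}$-rational, so that the iteration collapses to a power and eventually reaches $\id$. This is precisely where finiteness of $\Aut(A,\lambda)$ is essential — without it, no finite power of $f_N$ need be the identity and the cocycle would not trivialize.
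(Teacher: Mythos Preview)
Your argument is correct and is the standard Weil--Galois descent proof of ``field of moduli equals field of definition'' over finite fields, exploiting that $\Gal(\Fpbar/\F_{p^a})$ is procyclic together with finiteness of $\Aut(A,\lambda)$. The paper itself gives no argument and simply cites \cite[Lemma~4.1]{ibukiyama:type} and \cite[Prop.~6.3]{yu:superspecial}; your write-up supplies precisely the kind of proof those references contain, so there is no genuine difference in approach to discuss.

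One small point worth making explicit: for the collection $\{f_n\}_{0\le n<Nm}$ to really be a descent datum along $\F_{p^{aNm}}/\F_{p^a}$, each $f_n$ must itself be defined over $\F_{p^{aNm}}$, not just over $\Fpbar$. You can secure this in either of two ways. Cleanest is to enlarge $N$ at the outset so that the single isomorphism $f$ is already $\F_{p^{aN}}$-rational (it is a morphism between finite-type schemes, hence defined over some finite field); then every $f_n$ is $\F_{p^{aN}}$-rational. Alternatively, once you know $f_{Nm}=\id$, the cocycle identity applied twice gives
\[
{}^{\phi^{Nm}}f_n \circ f_{Nm} \;=\; f_{Nm+n} \;=\; {}^{\phi^{n}}f_{Nm}\circ f_n,
\]
whence ${}^{\phi^{Nm}}f_n=f_n$, so each $f_n$ is automatically $\F_{p^{aNm}}$-rational. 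Either way the gap closes immediately and the rest of your argument goes through unchanged.
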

\begin{proof}
  The lemma is proved for $A$ being superspecial \cite[Lemma
  4.1]{ibukiyama:type}. The same proof works for an arbitrary abelian
  variety; also see \cite[Prop.~6.3]{yu:superspecial} for a similar proof. \qed 
\end{proof}

\begin{lemma}\label{2.2}
  If an element $(A,\lambda)$ in $\Sigma_n$ has a model $(A',\lambda')$
  defined over $\Fp$, then $\ker \lambda'=A'[F]$. 
\end{lemma}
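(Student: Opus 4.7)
The plan is to prove the lemma by a straightforward fpqc descent argument, transporting the equality $\ker\lambda=A[F]$ from $\Fpbar$ back down to $\Fp$ using the fact that both the formation of relative Frobenius and the formation of the kernel of a polarization commute with arbitrary base change.

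First, fix an isomorphism $\varphi:(A',\lambda')\otimes_{\Fp}\Fpbar\isoto(A,\lambda)$ coming from the hypothesis that $(A',\lambda')$ is an $\Fp$-model of $(A,\lambda)$. Let $F':A'\to A'^{(p)}$ denote the relative Frobenius morphism over $\Fp$. Since the formation of the relative Frobenius morphism is functorial with respect to base change, pulling $F'$ back along $\Spec\Fpbar\to\Spec\Fp$ yields, under the identification $\varphi$, the relative Frobenius $F:A\to A^{(p)}$; in particular
\[ A'[F']\otimes_{\Fp}\Fpbar = A[F] \]
as closed subgroup schemes of $A$. On the other hand, the kernel of a homomorphism of abelian schemes is compatible with base change, so $(\ker\lambda')\otimes_{\Fp}\Fpbar=\ker\lambda$ as closed subgroup schemes of $A$. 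By the defining property of $\Sigma_n$, the latter equals $A[F]$.

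Hence $\ker\lambda'$ and $A'[F']$ are two closed subgroup schemes of $A'$ whose pullbacks along the faithfully flat morphism $\Spec\Fpbar\to\Spec\Fp$ coincide (both equal $A[F]$ as closed subschemes of $A$). By fpqc descent for closed subschemes, it follows that $\ker\lambda'=A'[F']$ as closed subgroup schemes of $A'$, which is the desired conclusion.

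The argument is essentially routine; the only point to be careful about is that the equality one verifies after base change must be an equality of subschemes of $A$ (not merely an abstract isomorphism of group schemes), so that descent applies. This is ensured by tracking everything through the single fixed identification $\varphi$, under which both base-changed subgroup schemes are literally $A[F]\subset A$. No difficulty should arise.
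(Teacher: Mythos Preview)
Your proof is correct and follows essentially the same approach as the paper: both arguments base-change the two subgroup schemes to $\Fpbar$, identify them there as $A[F]$, and then invoke faithfully flat descent to conclude over $\Fp$. The only difference is that the paper pauses to justify the compatibility of $\ker$ with base change via a rank comparison of finite group schemes, whereas you cite it as a standard fact; otherwise the proofs are the same.
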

\begin{proof}
  We use the following basic fact: if $H_1 < H_2$ are two finite
  group schemes over a field and if $\rank H_1=\rank H_2$, then
  $H_1=H_2$. It follows that if $\varphi: A\to B$ is an isogeny of
  abelian varieties over $k$, $K/k$ is a field extension and
  $\varphi_K:A_K\to 
  B_K$ is the base change morphism, then
  $\ker \varphi_K=\ker \varphi \otimes_k K$. Applying this for
  $F:A\to A^{(p)}$ and $K/k=\Fpbar/\Fp$, we get 
  \[ \ker \lambda'\otimes \Fpbar=\ker \lambda=A[F]=A'[F]\otimes
  \Fpbar. \]
  By the faithfully flat descent, we have $\ker \lambda'=A'[F]$. \qed
\end{proof}

Let $\wt \Sigma_n(\Fp)$ be the set of isomorphism classes of
superspecial polarized abelian varieties $(A,\lambda)$ 
of dimension $n$ ($n$ being even)
over $\Fp$ such that $\ker(\lambda)=A[F]$. It follows from
Lemmas~\ref{2.1} and~{2.2} that the natural map $\wt \Sigma_n(\Fp)\to
\Sigma_n(\Fp)$ is surjective. Particularly, 
the set $\Sigma_n(\Fp)$ is nonempty if and only if so is 
$\wt \Sigma_n(\Fp)$. 

\subsection{Fields of definition for components}
\label{sec:2.2}

As in Section 1, we 
let $\calA_n$ denote the coarse moduli space over $\Fp$ 
of $n$-dimensional
principally polarized abelian varieties, and let $\calS_n\subset \calA_g$ 
be the supersingular locus of 
of $\calA_g$. 
Li-Oort proved \cite[4.9, p.~26]{li-oort} that there is
a one-to-one correspondence between the set $\Pi_0(\ol \calS_n)$ 
of irreducible components
of $\ol \calS_n:=\calS_n\otimes \Fpbar$ and either the set $\Lambda_n$ or the set
$\Sigma_n$ according as $n$ is odd or even. 
The Galois group $\Gamma$ operates on $\Pi_0(\ol \calS_n)$ 
as well as on the sets $\Lambda_n$ and $\Sigma_n$. 
We claim that the correspondence in loc.~cit.~respects the action of
$\Gamma$. 
Let $V\in \Pi_0(\ol \calS_n)$ be an irreducible component. 
It is proved in \cite[4.9 (iii), p.~26]{li-oort} that the subset $U$ of 
supergeneral abelian varieties
in the sense of Li and Oort (i.e. with $a=1$) 
is an open and dense subset in
$V$. For any point $(B,\lambda_B)$ in $U$, 
there is a unique ``PFTQ'' (polarized
flag type quotient) up to equivalence \cite[Chapter 3]{li-oort}
\[ (A_{n-1},\eta_{n-1})\to  (A_{n-2},\eta_{n-2})\to \dots \to
(A_{0},\eta_{0}) \]
such that $(A_{0},\eta_{0}) \simeq (B,\lambda_B)$ and
$(A_{n-1},\eta_{n-1})\simeq (A,p^m \lambda)$ for a superspecial polarized
abelian variety $(A,\lambda)$ that lies in $\Lambda_n$ if $n$ is odd and in
$\Sigma_n$ if $n$ is even. The pair $(A,\lambda)$ depends only on
$U$ up to isomorphism, and corresponds to the component $V$. 
It is then clear from this construction that for any $\sigma\in \Gamma$   
 the conjugate ${}^\sigma U$ of $U$ corresponds to the isomorphism class
 $[({}^\sigma A,{}^\sigma\lambda)]$. This proves the claim. 
Note that the isomorphism class
 $[({}^\sigma A,{}^\sigma\lambda)]$ is defined over $\Fp$ if and only
 if $(A.\lambda)$ admits a model defined over $\Fp$ (Lemma~\ref{2.1}).   
Thus, we have the following consequence of results of Li and Oort. 

\begin{thm}[Li-Oort, cf. {\cite[Theorem 4.4]{ibukiyama:type}}]\label{2.3} \

{\rm (1)} Every irreducible component $V\subset \ol \calS_n$
    is defined over $\F_{p^2}$. 

{\rm (2)} Let $\Pi_0(\ol \calS_n)(\Fp)$ be the set of irreducible
    components of $\ol \calS_n$ that are defined over $\Fp$. Then 
    \begin{equation}
      \label{eq:li-oort}
      \Pi_0(\ol \calS_n)(\Fp)\simeq 
      \begin{cases}
        \Lambda_n(\Fp) & \text{if $n$ is odd;} \\
        \Sigma_n(\Fp)  & \text{if $n$ is even.}
      \end{cases}
    \end{equation}
\end{thm}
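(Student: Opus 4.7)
The plan is to derive both statements from the Li--Oort bijection
\[ \Pi_0(\ol \calS_n) \longleftrightarrow \begin{cases} \Lambda_n & \text{if $n$ is odd,} \\ \Sigma_n & \text{if $n$ is even,} \end{cases} \]
of \cite[4.9]{li-oort}, together with its $\Gamma$-equivariance that has already been established in the paragraph preceding the theorem. Once equivariance is in hand, both parts follow formally by taking fixed points of appropriate subgroups of $\Gamma$, so the real content sits in the equivariance of the bijection rather than in the reduction from it.

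For part (1), I would invoke the observation recalled in Subsection~\ref{sec:2.1} that the open subgroup $\Gamma_2\subset\Gamma$ acts trivially on both $\Lambda_n$ and $\Sigma_n$, because any superspecial polarized abelian variety over $\Fpbar$ descends, together with all of its endomorphisms and polarizations, to $\F_{p^2}$. Transporting this triviality across the equivariant bijection, $\Gamma_2$ acts trivially on $\Pi_0(\ol \calS_n)$, so every irreducible component $V \subset \ol \calS_n$ is $\Gamma_2$-stable. Standard Galois descent for reduced closed subschemes of a finite-type $\Fp$-scheme, applied to the finite Galois extension $\F_{p^2}/\Fp$, then shows that $V$ descends to a closed subscheme of $\calS_n \otimes \F_{p^2}$, giving (1).

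For part (2), I would argue that a reduced closed subscheme of $\ol\calS_n$ is defined over $\Fp$ if and only if it is stable under the full Galois group $\Gamma$, i.e.\ fixed by $\sigma_p$; this is again standard Galois descent. Under the equivariant bijection, $\sigma_p$-stability of $V$ translates to $\sigma_p$-fixedness of the associated class $[(A,\lambda)] \in \Lambda_n$ or $\Sigma_n$, which is precisely membership in $\Lambda_n(\Fp)$ or $\Sigma_n(\Fp)$. Taking $\Gamma$-fixed points on both sides of the Li--Oort bijection then yields the isomorphism \eqref{eq:li-oort} in (2).

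The main obstacle, and the only nonformal step, is the Galois-equivariance itself. The crux is that for any supergeneral point $(B,\lambda_B)$ in a component $V$, conjugating its essentially unique PFTQ by $\sigma \in \Gamma$ produces a PFTQ for $({}^\sigma B, {}^\sigma \lambda_B) \in {}^\sigma V$ whose top term is $({}^\sigma A, p^m\, {}^\sigma \lambda)$; this uses the uniqueness of the PFTQ up to equivalence from \cite[Chapter 3]{li-oort} together with the fact that relative Frobenius commutes with conjugation by $\Fpbar$-automorphisms. With that compatibility established, the passage to fixed points completing (1) and (2) is purely formal, and Lemma~\ref{2.1} reassures us that the $\Fp$-structure on the fixed isomorphism class is actually realized by a model over $\Fp$.
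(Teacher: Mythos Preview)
Your proposal is correct and follows essentially the same approach as the paper: the paper establishes the $\Gamma$-equivariance of the Li--Oort bijection via the PFTQ construction in the paragraph preceding the theorem, and then states Theorem~\ref{2.3} as a direct consequence, exactly as you do by taking $\Gamma_2$- and $\Gamma$-fixed points. Your discussion of the equivariance step (conjugating the PFTQ) also mirrors the paper's argument.
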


\subsection{Theorem~\ref{1.2} implies Theorem~\ref{1.1}}
\label{sec:2.3}

It follows from Theorem~\ref{2.3} that Theorem~\ref{1.1}
is equivalent to that $\Lambda_n(\Fp)$ is nonempty if $n$ is odd and
$\Sigma_n(\Fp)$ is nonempty if $n$ is even. Non-emptiness of
$\Lambda_n(\Fp)$ is obvious. Thus, it reduces to prove 
non-emptiness of $\Sigma_n(\Fp)$. But this following from 
non-emptiness of $\Sigma_2(\Fp)$, which is equivalent to
Theorem~\ref{1.2} by Lemma~\ref{2.1}.

\subsection{Explicit formula for $|\Sigma_2(\Fp)|$}
\label{sec:2.4}
Let $\chi$ be the quadratic character associated to the real quadratic
field $\Q(\sqrt{p})/\Q$, and let $B_{2,\chi}$ be the 
second generalized Bernoulli number. For any algebraic number $\alpha$,
we write $h(\alpha)$ for the class number of the number field
$\Q(\alpha)$. 

\begin{thm}[Ibukiyama~{\cite[Theorem 5.2]{ibukiyama:quinary}}]\label{2.4}
  One has \[ \text{$|\Sigma_2(\Fp)|=1,1,1$, for $p=2,3,5$,
  respectively.} \] 
  For $p\ge 7$, if
  $p\equiv 1 \mod 4$, then 
  \begin{equation}
    \label{eq:2.2}
    \begin{split}
      |\Sigma_2(\Fp)|=& \frac{1}{2^5\cdot 3} \left(9-2 \left
       (\frac{2}{p} \right ) \right ) B_{2,\chi}+\frac{1}{2^4}
       h(\sqrt{-p}) \\
      & +\frac{1}{2^3}h(\sqrt{-2p})+\frac{1}{2^2\cdot 3}\left(3+ \left
       (\frac{2}{p} \right ) \right ) h(\sqrt{-3p});
    \end{split}
  \end{equation}
 if $p\equiv 3 \mod 4$, then 
  \begin{equation}
    \label{eq:2.3}
    \begin{split}
      |\Sigma_2(\Fp)|=& \frac{1}{2^5\cdot 3}  B_{2,\chi}+\frac{1}{2^4}
       \left(1- \left
       (\frac{2}{p} \right )\right ) h(\sqrt{-p}) \\
      & +\frac{1}{2^3} h(\sqrt{-2p})+\frac{1}{2^2\cdot 3}  h(\sqrt{-3p}).
    \end{split}
  \end{equation}
\end{thm}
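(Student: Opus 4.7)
The plan is to convert the enumeration of $\Sigma_2(\Fp)$ into a class-number problem for a genus of rank-two quaternion hermitian lattices, and to solve that problem by combining a mass formula with an explicit count of lattice classes having non-central automorphisms.

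First, let $B := B_{p,\infty}$ be the definite quaternion algebra over $\Q$ ramified exactly at $p$ and $\infty$, with a chosen maximal order $\O_B$. Via the Ekedahl-type functor $(A,\lambda) \mapsto L(A,\lambda) := \Hom(E_0^2, A)$, equipped with the $\O_B$-hermitian form induced by $\lambda$ and the canonical polarization of $E_0^2$, one obtains a bijection between $\Sigma_2$ and the set of isomorphism classes in a single genus $\calG$ of right $\O_B$-lattices of rank $2$: the local type at $p$ is the non-self-dual one encoded by $\ker\lambda = A[F]$, and self-dual at every $\ell \neq p$. Combined with Lemma~\ref{2.1}, this identifies $|\Sigma_2(\Fp)|$ with the class number $H(\calG)$.

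Second, I would apply the Hashimoto--Ibukiyama mass formula for the quaternion hermitian group $U(2,B)$ to compute
\[ M := \sum_{[L] \in \calG} \frac{1}{|U(L)|} . \]
The archimedean factor is proportional to $\zeta_{\Q(\sqrt{p})}(-1) = \tfrac{1}{24} B_{2,\chi}$ and produces the $B_{2,\chi}$ contribution. The local factor at each $\ell \notin \{2,3,p\}$ is trivial, the factor at $p$ records the chosen non-self-dual type, and the factors at $\ell = 2, 3$ depend on $p \bmod 8$ (equivalently on $\left(\tfrac{2}{p}\right)$) and on $p \bmod 4$, yielding the leading coefficients $\tfrac{1}{2^5 \cdot 3}\bigl(9 - 2(\tfrac{2}{p})\bigr)$ when $p \equiv 1 \bmod 4$ and $\tfrac{1}{2^5 \cdot 3}$ when $p \equiv 3 \bmod 4$.

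Third, I would enumerate the classes $[L]$ with non-central stabilizer. By the classification of finite subgroups of $U(2,B)(\Q)$ up to conjugacy, such extra units detect optimal embeddings into $\End_{\O_B}(L)$ of orders in the CM fields $\Q(\sqrt{-p})$, $\Q(\sqrt{-2p})$, and $\Q(\sqrt{-3p})$ (these being the imaginary quadratic fields whose embeddings survive both the hermitian constraint and the ramification at $p$). For each such $K$, Eichler-type embedding formulas evaluate the weighted sum over the genus of the number of optimal embeddings $\O_K \hookrightarrow \End_{\O_B}(L)$ to $h(K)$ times a product of local embedding numbers; at $\ell \neq 2,3$ these local numbers are trivial, while at $\ell = 2, 3$ they depend on $\left(\tfrac{2}{p}\right)$ and $p \bmod 4$, producing the class-number terms $\tfrac{1}{2^4} h(\sqrt{-p})$, $\tfrac{1}{2^4}\bigl(1 - (\tfrac{2}{p})\bigr) h(\sqrt{-p})$, $\tfrac{1}{2^3} h(\sqrt{-2p})$, $\tfrac{1}{2^2 \cdot 3} h(\sqrt{-3p})$, and $\tfrac{1}{2^2 \cdot 3}\bigl(3 + (\tfrac{2}{p})\bigr) h(\sqrt{-3p})$ that appear in (\ref{eq:2.2}) and (\ref{eq:2.3}).

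Finally, grouping the summands of $M$ by the isomorphism type of $U(L)$, the class number is recovered from $M$ via
\[ H(\calG) - 2M = \sum_{[L]\,:\,|U(L)| > 2} \Bigl(1 - \frac{2}{|U(L)|}\Bigr), \]
into which the CM-embedding counts of the previous step are inserted and simplified to yield (\ref{eq:2.2}) and (\ref{eq:2.3}); the small primes $p = 2, 3, 5$ are dispatched by direct enumeration of $\calG$, which contains a single class in each case. The principal obstacle is the local analysis at $\ell = 2$ and $\ell = 3$: matching the Hashimoto--Ibukiyama local densities and the optimal-embedding numbers modulo conjugation by the local unit groups exactly to the coefficients $(9 - 2(\tfrac{2}{p}))/96$, $(3 + (\tfrac{2}{p}))/12$, and $(1 - (\tfrac{2}{p}))/16$ demands careful case-splitting on $p \bmod 8$ and $p \bmod 12$, together with the non-trivial identification of the $\F_{p^2}$-rational structure on $\calG$ arising from Lemma~\ref{2.2}.
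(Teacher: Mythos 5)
First, note that the paper contains no proof of Theorem~\ref{2.4}: the formula is quoted from Ibukiyama \cite{ibukiyama:quinary} and used as a black box, and the stated purpose of Section~\ref{sec:3} is precisely to prove Theorem~\ref{1.2} \emph{without} appealing to it. So your proposal can only be measured against Ibukiyama's method, whose broad shape --- binary quaternion hermitian lattices over a maximal order of $B_{p,\infty}$, the Hashimoto--Ibukiyama mass formula, and a count of classes with non-central unit groups via optimal embeddings of CM orders --- you have reproduced correctly in outline.

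There is, however, one genuine conceptual gap. In your second paragraph you identify $|\Sigma_2(\Fp)|$ with the class number $H(\calG)$ of the relevant genus. The lattice dictionary identifies the class set of $\calG$ with $\Sigma_2$, i.e.\ with isomorphism classes over $\Fpbar$; the quantity in the theorem is $|\Sigma_2(\Fp)|=|\Sigma_2^\Gamma|$, the number of classes fixed by $\sigma_p$, and the paper stresses that this action need not be trivial --- whether $\Sigma_2(\Fp)$ is even nonempty is the whole point at issue. Lemma~\ref{2.1} says that a Galois-fixed class admits a model over $\Fp$; it does not say that every class is Galois-fixed. Counting fixed classes is a \emph{type-number} problem, not a class-number problem: $\sigma_p$ acts on the class set of $\calG$ through an explicit involution, and the number of its fixed points is recovered from the number $T$ of orbits via $2T-H$; identifying $T$ with the type number of the genus is exactly the mechanism of Ibukiyama--Katsura \cite{ibukiyama-katsura} for $\Lambda_n$, and the title of \cite{ibukiyama:type} indicates that the same mechanism is what is needed here. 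Your mass-formula-plus-automorphisms scheme, carried out as written, would therefore produce a formula for $|\Sigma_2|=H(\calG)$ rather than for $|\Sigma_2(\Fp)|$; to repair it you must add a trace formula for the involution (equivalently, compute the type number of the non-principal genus). Beyond this, the remaining issues are ones of completeness rather than correctness: the local densities at $2$ and $3$, the classification of the possible unit groups, and the verification that only $\Q(\sqrt{-p})$, $\Q(\sqrt{-2p})$ and $\Q(\sqrt{-3p})$ contribute are where the actual content of the theorem lies, and your outline defers all of them.
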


Non-emptiness of $\Sigma_2(\Fp)$ (or Theorem~\ref{1.2}) 
follows immediately from
Theorem~\ref{2.4}, and hence Theorem~\ref{1.1} follows.    

\section{Construction of certain superspecial polarized abelian
  varieties}
\label{sec:3}


In this section we include some results concerning non-emptiness of
the set $\wt \Sigma_n(\Fp)$. 
The Frobenius
endomorphism of an abelian variety $A$ over $\Fq$ will be denoted by
$\pi_A$. Let $\wt \Sigma'_n(\Fp)$ be the set of isomorphism classes of
superspecial abelian varieties $(A,\lambda)$ of dimension $n$ over
$\Fp$ such that $\pi_A^2=-p$ and $\ker \lambda=A[F]$. It is clear that
non-emptiness of $\wt \Sigma'_n(\Fp)$ implies that of $\wt \Sigma_n(\Fp)$.

\begin{lemma}\label{3.1}
  Assume that the integer $n=2c$ is divisible by $4$. 
  Then there is a superspecial polarized 
  abelian variety $(A',\lambda')$ in $\wt \Sigma'_n(\Fp)$. 
\end{lemma}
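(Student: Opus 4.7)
My plan is to first reduce the general case $4 \mid n$ to $n = 4$ by taking self-products, and then construct an explicit element of $\wt\Sigma'_4(\Fp)$. The set $\wt\Sigma'_n(\Fp)$ is closed under products: the kernel of a product polarization $\mu_1 \times \mu_2$ is $(B_1 \times B_2)[F]$, and the Frobenius of the product still satisfies $\pi^2 = -p$, while superspeciality and the defining field are preserved. Writing $n = 4m$, it therefore suffices to exhibit one element of $\wt\Sigma'_4(\Fp)$ and take its $m$-fold self-product.

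For $n = 4$, I would fix a supersingular elliptic curve $E_0/\Fp$ with Frobenius $\pi = \pi_{E_0}$ satisfying $\pi^2 = -p$ and set $A' = E_0^4$, so $\pi_{A'}^2 = -p$ automatically. A polarization on $A'$ defined over $\Fp$ corresponds to a positive-definite Hermitian matrix $M \in M_4(\Z[\pi])$ (Rosati involution $\pi \mapsto -\pi$), and the condition $\ker \lambda_M = A'[F] = E_0[\pi]^4$ is equivalent to a factorization $M = \pi N$ with $N \in GL_4(\Z[\pi])$; Hermiticity of $M$ then forces $N^* = -N$ (anti-Hermitian). The hypothesis $4 \mid n$ enters here because $\det M = \pi^4 \det N = p^2 \det N$ is a positive integer when $\det N = \pm 1$ (a unit in $\Z[\pi]$), consistent with positive definiteness of $M$, whereas for $n \equiv 2 \pmod 4$ the analogous determinant $\pi^n \det N = -p^{n/2} \det N$ has the wrong sign.

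A natural ansatz is $M = p I_4 + \pi N_0$ with $N_0 \in M_4(\Z)$ skew-symmetric (which automatically makes $M$ Hermitian), choosing $N_0$ to have characteristic polynomial $(t^2 + (p - 1))^2$ so that $\det M = p^2$ and the eigenvalues of $M$ are $p \pm \sqrt{p(p-1)} > 0$. The characteristic polynomial condition reduces to writing $p - 1 = a^2 + b^2 + c^2$ as a sum of three integer squares, realized by
\[ N_0 = \begin{pmatrix} 0 & a & b & c \\ -a & 0 & c & -b \\ -b & -c & 0 & a \\ -c & b & -a & 0 \end{pmatrix}, \]
whose Pfaffian is $a^2 + b^2 + c^2 = p - 1$, giving $\det N = \chi_{N_0}(\pi) = (\pi^2 + (p-1))^2 = 1$ and hence $N = N_0 - \pi I_4 \in GL_4(\Z[\pi])$. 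The main obstacle is that Legendre's three-square theorem excludes $p$ with $p - 1 = 4^k(8\ell + 7)$, the smallest such prime being $p = 29$ (since $28 = 4 \cdot 7$); for those exceptional primes a modified ansatz is required --- either a non-identity positive-definite diagonal in $M = D + \pi N_0$, or a non-product $\Fp$-form $A'$ obtained by twisting $E_0^4$ by a cocycle in $H^1(\Gal(\Fpbar/\Fp), \Aut(E_0^4))$. Carrying out this construction uniformly in $p$ is the only substantive technical step of the proof.
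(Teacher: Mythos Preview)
Your reduction to $n=4$ via self-products is fine, but the explicit construction on $E_0^4$ is incomplete: as you yourself note, the ansatz $M = pI_4 + \pi N_0$ with $N_0$ built from a three-square decomposition $p-1 = a^2+b^2+c^2$ fails precisely when $p-1$ is of the form $4^k(8\ell+7)$ (e.g.\ $p=29$), and you have not actually supplied the promised ``modified ansatz'' for those primes. The suggestions you list --- replacing $pI_4$ by a non-scalar positive diagonal, or passing to a Galois twist of $E_0^4$ --- are plausible directions, but neither is carried out, so as written the proposal is a sketch that covers most but not all primes. There is no evident reason the matrix approach cannot be pushed through, but it requires genuine additional work that you have deferred.

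The paper sidesteps this arithmetic obstruction entirely with a one-line construction via Weil restriction. Since $4 \mid n$ means $c = n/2$ is even, one may choose a superspecial $(A_1,\lambda_1)$ of dimension $c$ over $\F_{p^2}$ with $\pi_{A_1} = -p$ and $\ker\lambda_1 = A_1[F]$ (such exist because $\Sigma_c \neq \emptyset$ and every member descends to $\F_{p^2}$), and set $(A',\lambda') := \Res_{\F_{p^2}/\Fp}(A_1,\lambda_1)$. Over $\F_{p^2}$ this splits as $(A_1,\lambda_1)\times(A_1^{(p)},\lambda_1^{(p)})$, from which $\ker\lambda' = A_1[F]\times A_1^{(p)}[F] = A'[F]$ and $\pi_{A'}^2 = -p$ are immediate. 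This is both shorter and uniform in $p$; your approach, if completed, would by contrast have the merit of producing an explicit Hermitian matrix and of making transparent why the parity condition $4\mid n$ enters (via the sign of $\pi^n$), whereas in the paper's argument it enters only through the requirement that $c$ be even so that $\Sigma_c$ is defined.
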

\begin{proof}
  Choose a superspecial abelian
  variety $A_1$ over $\F_{p^2}$ of dimension $c$ such that
  $\pi_{A_1}=-p$. Since every polarization of $A'\otimes \Fpbar$ 
  is defined over
  $\F_{p^2}$ and the set $\Sigma_c$ is nonempty, there exists a
  polarization $\lambda_1$ of $A_1$ such that $\ker \lambda_1=A_1[F]$.  
  Take 
  \begin{equation}
    \label{eq:3.1}
   (A',\lambda'):=\Res_{\F_{p^2}/\Fp} (A_1,\lambda_1). 
  \end{equation}
Then $(A',\lambda')$ is a superspecial polarized abelian variety over
$\Fp$ of dimension $n$ such that 
\begin{equation}
  \label{eq:2.4}
  (A',\lambda')\otimes \F_{p^2}=(A_1,\lambda_1)\times
(A_1^{(p)},\lambda_1^{(p)}).
\end{equation}
This gives 
\begin{equation}
  \label{eq:2.5}
  \ker \lambda'=\ker \lambda_1\times \ker
\lambda_1^{(p)}=A_1[F]\times A_1^{(p)}[F]=A'[F].
\end{equation}
From (\ref{eq:2.4}) we get $\pi_{A'}^2=-p$. \qed
\end{proof}

\begin{lemma}\label{3.2}
  Assume that $\left (\frac{-1}{p}\right)=1$. Then for any even positive
  integer $n$, there exists a
  superspecial polarized abelian variety $(A,\lambda)$ in $\wt
  \Sigma'_n(\Fp)$ 
\end{lemma}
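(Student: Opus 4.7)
The plan is to reduce to $n=2$ and then produce an explicit polarization on the self-square of a supersingular elliptic curve, where the hypothesis $\left(\frac{-1}{p}\right)=1$ enters through a single Moore-determinant computation.

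First, observe that if some $(A_2,\lambda_2)\in\wt\Sigma'_2(\Fp)$ exists, then the $c$-fold self-product $(A_2,\lambda_2)^c$ lies in $\wt\Sigma'_{2c}(\Fp)$ for every $c\ge 1$: superspeciality, $\Fp$-rationality, $\pi^2=-p$, and $\ker\lambda=A[F]$ are all manifestly preserved under products. So it suffices to produce a single element of $\wt\Sigma'_2(\Fp)$.

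To construct such an element I would fix a supersingular elliptic curve $E_0/\Fp$ with $\pi:=\pi_{E_0}$ satisfying $\pi^2=-p$ (this isogeny class is nonempty by Honda--Tate), set $A=E_0\times E_0$, and let $\mu$ be the product of the canonical principal polarizations of the two factors. Then $\Z[\pi]\subseteq\End_{\Fp}(E_0)$ and hence $\Mat_2(\Z[\pi])\subseteq\End_{\Fp}(A)$; the Rosati involution on $\End_{\Fp}(A)$ with respect to $\mu$ is $M\mapsto\bar M^T$, where the canonical involution on $\Z[\pi]$ sends $\pi\mapsto-\pi$ (because $\Trd(\pi)=0$). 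Any positive definite Hermitian $\phi\in\Mat_2(\Z[\pi])$ yields an $\Fp$-polarization $\lambda:=\mu\circ\phi$, whose degree equals the square of the Moore determinant of $\phi$. The hypothesis $\left(\frac{-1}{p}\right)=1$ provides an integer $n_0$ with $n_0^2+1=pk$ for some $k\in\Z_{>0}$, and my candidate is
\[
  \phi=\begin{pmatrix} p & n_0\pi \\ -n_0\pi & pk \end{pmatrix},
\]
with Moore determinant $p\cdot pk-(n_0\pi)(-n_0\pi)=p(pk-n_0^2)=p$, yielding $\deg\lambda=p^2$. Hermitianity and positive definiteness are visible from the shape of $\phi$ together with $p>0$.

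It then remains to identify $\ker\phi$ with $A[F]=E_0[\pi]^2$. The inclusion $A[F]\subseteq\ker\phi$ is immediate, since both operators $p=-\pi^2$ and $n_0\pi$ annihilate $E_0[\pi]$; equality follows from $|\ker\phi|=\deg\phi=p^2=|A[F]|$. The substantive content of the argument is concentrated in one place---matching the Moore determinant of a Hermitian matrix over $\Z[\pi]$ to $p$ exactly---which is precisely what $\left(\frac{-1}{p}\right)=1$ makes possible; the rest is routine verification.
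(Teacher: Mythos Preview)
Your argument is correct and self-contained, but it proceeds along a genuinely different line from the paper's proof. Both reductions to $n=2$ are identical. For $n=2$, the paper parameterizes degree-$p$ isogenies $\alpha:(A,\lambda)\to(E_0^2,\lambda_0)$ by a $\bfP^1$ carrying an $\Fp$-structure, invokes the Katsura--Oort criterion that $(A,\lambda)\in\Sigma_2$ iff the corresponding point $[a:b]$ satisfies $a^{p+1}+b^{p+1}=0$, and then observes that $\left(\frac{-1}{p}\right)=1$ yields an $\Fp$-rational solution of $a^2+b^2=0$. You instead keep the underlying surface fixed as $E_0^2$ and modify the polarization: using the Hermitian-form description of polarizations on a self-product, you write down an explicit positive-definite Hermitian $\phi\in\Mat_2(\Z[\pi])$ whose determinant is forced to equal $p$ by the relation $n_0^2+1=pk$, and read off $\ker\lambda=A[F]$ directly from the matrix entries. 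The paper's route is more geometric and ties the construction to the moduli-theoretic picture of $\ol\calS_2$, at the cost of citing \cite{katsura-oort:surface} and \cite{yu:ss_siegel}; your route is more algebraic and entirely elementary, requiring no external input beyond the standard dictionary between polarizations and Hermitian forms. One terminological quibble: since $\Z[\pi]$ is commutative, your ``Moore determinant'' is just the ordinary determinant, and the identity $\deg\lambda=(\det\phi)^2$ holds because $\det\phi$ lands in $\Z$ for a Hermitian matrix over an imaginary quadratic order; it would be worth saying this explicitly.
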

\begin{proof}
  It suffices to prove this for $n=2$ as we can take the product of
  copies of such a polarized abelian surface. Let
  $(A_0,\lambda_0)=(E_0^2,\lambda_0)$ be the superspecial principally 
  polarized
  abelian surface over $\Fp$ as in Subsection~\ref{sec:2.1}; one has
  $\pi_{A_0}^2=-p$. Consider isogenies $\alpha:(A,\lambda)\to
  (A_0,\lambda_0)$ of degree $p$ with $\alpha^*
  \lambda_0=\lambda$. The family $\{\alpha\}$ forms a projective space
  $\bfP^1$ that has an $\Fp$-structure induced from the $\Fp$-structure of
  $A_0$. If an isogeny $\alpha:(A,\lambda)\to (A_0,\lambda_0)$
  corresponds to a point $[a:b]\in \bfP^1(\Fpbar)$, then
  $(A,\lambda)\in \Sigma_2$ if and only if $a^{p+1}+b^{p+1}=0$
  (\cite[(3.4), p.~119]{katsura-oort:surface} 
  and \cite[Lemma 4.3]{yu:ss_siegel}). Since
  $(-1/p)=1$, there exists a point $[a:b]\in \bfP^1(\Fp)$ such that
  $a^{p+1}+b^{p+1}=a^2+b^2=0$. Then the corresponding isogeny
  $\alpha:(A,\lambda)\to (A_0,\lambda_0)$ is defined over $\Fp$ and
  satisfies both $\pi_A^2=-p$ and $\ker \lambda=A[F]$. \qed   
\end{proof}

\begin{cor}\label{3.3}
  Assume that $4|n$ or $\left(\frac{-1}{p}\right)=1$. Then the
  set $\wt \Sigma'_n(\Fp)$ is nonempty. Consequently, so is the set
  $\wt \Sigma_n(\Fp)$. 
\end{cor}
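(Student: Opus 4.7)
The plan is to derive Corollary~\ref{3.3} as an immediate consequence of the two preceding lemmas together with the tautological inclusion $\wt\Sigma'_n(\Fp)\subseteq \wt\Sigma_n(\Fp)$, which holds because the defining conditions of $\wt\Sigma'_n(\Fp)$ (superspecial, over $\Fp$, $\ker\lambda=A[F]$, plus $\pi_A^2=-p$) strictly include those of $\wt\Sigma_n(\Fp)$. Thus it suffices to produce an element of $\wt\Sigma'_n(\Fp)$ under either hypothesis, and the two implications of the corollary will follow in sequence.

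First I would split on the hypothesis. In the case $4\mid n$, I invoke Lemma~\ref{3.1} directly: the Weil restriction construction in~\eqref{eq:3.1} applied to a suitable superspecial $(A_1,\lambda_1)\in \Sigma_c$ over $\F_{p^2}$ with $\pi_{A_1}=-p$ produces an $(A',\lambda')\in \wt\Sigma'_n(\Fp)$. In the case $(-1/p)=1$, I invoke Lemma~\ref{3.2}, whose statement already delivers an element of $\wt\Sigma'_n(\Fp)$ for every even positive integer $n$ (the proof of the lemma reduces to $n=2$ and then takes a product of copies, so no further work is required here).

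Since in either case $\wt\Sigma'_n(\Fp)$ is nonempty, the first conclusion of the corollary is established. The second conclusion, namely $\wt\Sigma_n(\Fp)\neq\emptyset$, follows at once from the inclusion $\wt\Sigma'_n(\Fp)\subseteq \wt\Sigma_n(\Fp)$ noted above: forgetting the Frobenius condition $\pi_A^2=-p$ sends an element of the former to an element of the latter.

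There is essentially no obstacle here; the corollary is a packaging of Lemmas~\ref{3.1} and~\ref{3.2}. The only point that might warrant a sentence of justification in the write-up is the observation that $\wt\Sigma_n(\Fp)$ is defined only for even $n$, which is consistent with both hypotheses (automatic when $4\mid n$, and required by the appeal to Lemma~\ref{3.2} when $(-1/p)=1$); beyond this, the argument is a one-line chain of implications.
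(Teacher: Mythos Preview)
Your proposal is correct and matches the paper's approach exactly: the corollary is stated without proof as an immediate consequence of Lemmas~\ref{3.1} and~\ref{3.2}, together with the observation (already made in the paper just before Lemma~\ref{3.1}) that non-emptiness of $\wt\Sigma'_n(\Fp)$ implies that of $\wt\Sigma_n(\Fp)$.
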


\begin{remark}\label{3.4}
We expect that the sufficient conditions in Corollary~\ref{3.3} are
also necessary for non-emptiness of $\wt \Sigma'_n(\Fp)$.  
\end{remark}

For the rest of this section we construct a polarized abelian surface
as in Theorem~\ref{1.2}. \\

\npr {\bf Step 1.} There exists a principally polarized abelian surface
$(A_0,\lambda_0)$ over $\Fp$ with  $\pi_{A_0}^2=p$. 

Choose an supersingular elliptic curve $E$ over $\F_{p^2}$ such that
$\pi_{E}=p$. Take
\[ A_0=\Res_{\F_{p^2}/\Fp} E. \]
Then the Frobenius endomorphism $\pi_{A_0}$ of $A_0$ satisfies 
$\pi_{A_0}^2=p$, and one has $A_0\otimes \F_{p^2}=E\times
{}^{\sigma_p} E$.  
As $E$ admits a principal polarization $\mu$, 
the product $\lambda_0:=\mu\times {}^{\sigma_p} \mu$ on $A_0\otimes
\F_{p^2}$ is a principal polarization that is defined over $\Fp$. \\
 
\npr {\bf Step 2.} 
The (covariant) \dieu module $M_0$ of $A_0$ is a free module 
over $R$ of rank $2$, where $R:=\Zp[\pi_{A_0}]=\Z_p[\sqrt{p}]$. 
The Frobenius map
$\sfF$ and Verschiebung $\sfV$ operate as the multiplication by
$\sqrt{p}$. The quasi-polarization induced by $\lambda_0$ is a perfect
alternating pairing $\<\,, \,\>:M_0 \times M_0\to \Zp$ such that
$\<ax,y\>=\<x, ay\>$ for all $a\in R$ and $x,y\in M_0$. We claim 
that there exists an $R$-basis $e_1,e_2$ for $M_0$ such that 
\begin{equation}\label{eq:3.4}
  \<e_1,e_2\>=\<\sqrt{p} e_1, \sqrt{p} e_2\>=0 \quad \text{and}
  \quad \<e_1,\sqrt{p} e_2\>=\<\sqrt{p} e_1,e_2\>=1. 
\end{equation}
That is, $\{e_1,\sqrt{p}e_2, \sqrt{p} e_1, e_2\}$ is a Lagrangian
$\Z_p$-basis for
the symplectic pairing $\<\, ,\, \>$ on $M_0$. 

Let $K:=\Qp[\sqrt{p}]$ be the fraction field of $R$. Let 
\[ \<\,, \,\>_K: M_0\times M_0 \to R^\vee, \quad R^\vee:=\{x\in
K|\tr_{K/\Qp}(xR)\subset \Z_p\}=(2\sqrt{p})^{-1} R. \]
be the unique $R$-bilinear alternating form such that
$\<x,y\>=\tr_{K/\Q}\<x,y\>_K$, where $\tr_{K/\Qp}$ denotes the reduced
trace from $K$ to $\Qp$. Put
$\psi_K(x,y):=(2\sqrt{p})\<x,y\>_K$. Then $\psi_K:M_0\times M_0\to R$
is a perfect alternating $R$-bilinear pairing. 
Choose an $R$-basis $e_1,e_2$ for
$M_0$ such that $\psi_K(e_1,e_2)=1$. Using the formula
$\<x,y\>=\tr_{K/\Qp} (2\sqrt{p})^{-1} \psi_K(x,y)$, we check
(\ref{eq:3.4}) as follows:
\[ \<\sqrt{p} e_1,e_2\>=\<e_1,\sqrt{p} e_2\>=\tr (2 \sqrt{p})^{-1}
\sqrt{p}=\tr 2^{-1}=1, \]
\[ \<e_1,e_2\>=\tr (2 \sqrt{p})^{-1}=0, \quad \<\sqrt{p}e_1,\sqrt{p}
e_2\>=\<e_1, p e_2\>=0. \] \

\npr {\bf Step 3.} Let $\alpha: A\to A_0$ be any isogeny of degree $p$
defined over $\Fp$, and let $\lambda:=\alpha^*\lambda_0$. Then $A$ is a
superspecial abelian surface over $\Fp$ with $\pi_A^{2}=p$, and
$\lambda$ has the property $\ker \lambda=A[F]$. 

The \dieu module $M$ of $A$ fits into $\sfV M_0=\sqrt{p} M_0\subset
M\subset M_0$ with $\dim M/M_0=1$. Clearly, $M$ is an $R$-submodule so
that $\sfF^2$ acts as $p$ on $M$, and $M$ is superspecial. Put $\ol
M_0:=M_0/\sfV M_0={\rm Span} \{\bar e_1, \bar e_2 \}_{\Fp}$. Define a
pairing 
\[ (\,,\,):M_0\times M_0\to \Zp, \quad (x,y):=\<x,\sfF
y\>. \] Clearly, $(M_0,\sfV M_0)=(\sfV M_0,M_0)= p\Zp$. 
Modulo $p$, one obtains an $\Fp$-bilinear
pairing $(\, , \,):\ol M_0 \times \ol M_0\to \Fp$ satisfying 
$(\bar e_1, \bar e_2)=-(\bar e_2,\bar e_1)=1$ and $(\bar e_1,\bar
e_1)=(\bar e_2,\bar e_2)=0$. Now it is not hard to see that the
condition $\ker \lambda=A[F]$ is equivalent to $\sfV(M^t)=M$, which is
also equivalent to the condition $(\ol M,\ol M)=0$ (see the proof of
\cite[Lemma 4.3]{yu:ss_siegel}). As $\ol M$ is
generated by a vector $v=a \bar e_1+b \bar e_2$, where  $a,b\in \Fp$,
the condition $(\ol M,\ol M)=0$ follows immediately from  
$(v,v)=0$. Thus,  one proves $\ker \lambda=A[F]$. 

This completes the proof of Theorem~\ref{1.2}


\section*{Acknowledgements}
The author is grateful to  Tomoyoshi Ibukiyama for kind explanations of 
his recent works \cite{ibukiyama:quinary} and \cite{ibukiyama:type}.
The paper is completed during the author's visit at M\"unster
University; he acknowledges Urs Hartl and the institution for warm
hospitality and excellent working conditions.  
The author is partially supported by the grant 104-2115-M-001-001MY3.


\begin{thebibliography}{10}

\def\jams{{\it J. Amer. Math. Soc.}} 
\def\invent{{\it Invent. Math.}} 
\def\ann{{\it Ann. Math.}} 
\def\ihes{{\it Inst. Hautes \'Etudes Sci. Publ. Math.}} 

\def\ecole{{\it Ann. Sci. \'Ecole Norm. Sup.}}
\def\ecole4{{\it Ann. Sci. \'Ecole Norm. Sup. (4)}}
\def\mathann{{\it Math. Ann.}} 
\def\duke{{\it Duke Math. J.}} 
\def\jag{{\it J. Algebraic Geom.}} 
\def\advmath{{\it Adv. Math.}}
\def\compos{{\it Compositio Math.}} 
\def\ajm{{\it Amer. J. Math.}}
\def\grenoble{{\it Ann. Inst. Fourier (Grenoble)}}
\def\crelle{{\it J. Reine Angew. Math.}}
\def\mrt{{\it Math. Res. Lett.}}
\def\imrn{{\it Int. Math. Res. Not.}}
\def\acad{{\it Proc. Nat. Acad. Sci. USA}}
\def\tams{{\it Trans. Amer. Math. Sci.}}
\def\cras{{\it C. R. Acad. Sci. Paris S\'er. I Math.}} 
\def\mathz{{\it Math. Z.}} 
\def\cmh{{\it Comment. Math. Helv.}}
\def\docmath{{\it Doc. Math. }}
\def\asian{{\it Asian J. Math.}}
\def\jussieu{{\it J. Inst. Math. Jussieu}}

\def\manmath{{\it Manuscripta Math.}} 
\def\jnt{{\it J. Number Theory}} 
\def\ijm{{\it Israel J. Math.}}
\def\ja{{\it J. Algebra}} 
\def\pams{{\it Proc. Amer. Math. Sci.}}
\def\smfmemoir{{\it Bull. Soc. Math. France, Memoire}}
\def\bsmf{{\it Bull. Soc. Math. France}}
\def\sb{{\it S\'em. Bourbaki Exp.}}
\def\jpaa{{\it J. Pure Appl. Algebra}}
\def\jems{{\it J. Eur. Math. Soc. (JEMS)}}
\def\jtokyo{{\it J. Fac. Sci. Univ. Tokyo}}
\def\cjm{{\it Canad. J. Math.}}
\def\jaums{{\it J. Australian Math. Soc.}}
\def\pspm{{\it Proc. Symp. Pure. Math.}}
\def\ast{{\it Ast\'eriques}}
\def\pamq{{\it Pure Appl. Math. Q.}}
\def\nagoya{{\it Nagoya Math. J.}}
\def\forum{{\it Forum Math. }}

\def\tp{{to appear}}

\newcommand{\princeton}[1]{Ann. Math. Studies #1, Princeton
  Univ. Press}

\newcommand{\LNM}[1]{Lecture Notes in Math., vol. #1, Springer-Verlag}

\bibitem{ibukiyama:quinary} T.~Ibukiyama, Quinary lattices and binary
  quaternion hermitian lattices. Preprint, 2016, 14 pp. 


\bibitem{ibukiyama:type} T.~Ibukiyama, Type numbers of quaternion
  hermitian forms and supersingular abelian varieties. Preprint, 2016,
  18 pp. 

\bibitem{ibukiyama-katsura} T. Ibukiyama and T. Katsura, On
   the field of definition of superspecial polarized abelian varieties
   and type numbers. \compos~{\bf 91} (1994), 37--46.   


\bibitem{katsura-oort:surface} T. Katsura and F. Oort, Families of
   supersingular abelian surfaces, \compos~{\bf 62} (1987), 107--167.

\bibitem{li-oort} K.-Z. Li and  F. Oort, {\it Moduli of Supersingular
   abelian Varieties.} \LNM{1680}, 1998.







\bibitem{yu:ss_siegel} C.-F. Yu, The supersingular loci and mass
  formulas on Siegel modular varieties. \docmath~{\bf 11} (2006), 449--468.


\bibitem{yu:superspecial} C.-F.~Yu, An explicit reciprocity law
  arising from superspecial abelian varieties and trace formulas. 
  arXiv:1210.1120, 40 pp.


\end{thebibliography}
\end{document}